\newcommand{\xcrit}{x^{(c)}}
\newcommand{\alphacrit}{\alpha^{(c)}}
\newcommand{\e}{\mathrm{e}}
\renewcommand{\i}{\mathrm{i}}
\newcommand{\real}[1]{\mathrm{Re}\{#1\}}
\newcommand{\imag}[1]{\mathrm{Im}\{#1\}}
\newcommand{\expsin}{\mathrm{s}}
\newcommand{\expcos}{\mathrm{c}}
\newcommand{\R}{\mathbb{R}}
\newcommand{\AdditionalInfo}[1]{ }  
\newtheorem{proposition}{Proposition}
\newtheorem{corollary}{Corollary}
\begin{document}
\title[Robust optimization of delay differential equations]{A method for the optimization of nonlinear systems with delays that guarantees stability and robustness}
\author{Jonas Otten}
\author{Martin Mönnigmann}
\date{dates will be set by the publisher}
\subjclass{65K10, 70K50, 90C30, 93D09}
\keywords{delay differential equations, stabilization, robustness}
\date{\today{}}

\begin{abstract}  
We present a method for the steady state optimization of nonlinear delay differential equations. The method ensures stability and robustness, where a system is called robust if it remains stable despite uncertain parameters. Essentially, we ensure stability of all steady states of the nonlinear system on the steady state manifold that results from the variation of the uncertain parameters. The uncertain parameters are characterized by finite intervals, which may be interpreted as error bars and therefore are of immediate practical relevance. Stability despite uncertain parameters can be guaranteed by enforcing a lower bound on the distance of the optimal steady state to submanifolds of saddle-node and Hopf bifurcations on the steady state manifold. We derive constraints that ensure this distance. The proposed method differs from previous ones in that stability and robustness are guaranteed with constraints instead of with the cost function. Because the cost function is not required to enforce stability and robustness, it can be used to state economic or similar goals, which is natural in applications. We illustrate the proposed method by optimizing a laser diode. The optimization finds a steady state of maximum intensity while guaranteeing asymptotic or exponential stability despite uncertain model parameters. 
\end{abstract}
\maketitle

\section{Introduction}\label{sec:Intro}
Delays can significantly influence the behavior of a dynamical system.  
In fact, delays can have both a stabilizing and a destabilizing effect~\cite{Sipahi2011}. Technically, it is more difficult to analyze a delayed systems, 
because even a single delay leads to an infinite dimensional dynamical system~\cite{Fridman2010}.

Delays are present in various kinds of dynamical systems. They are, for example, part of traffic models where they model reaction times of human drivers~\cite{Helbing2001}, they influence population dynamics in multi-stage populations~\cite{Aiello1992, Adimy2010}, they cause limit cycles in milling and turning processes~\cite{Seguy2010, Insperger2008}, and they can destabilize supply chains~\cite{Sipahi2010}.
The delay present in external cavity lasers causes very interesting dynamical behaviour \cite{Lang1980,Mork1992,Haegeman2002}. External cavity lasers can exhibit even quasi-chaotic dynamics. The relevant time constants are typically dominated by photon lifetimes of around 1ps~\cite[p.\,232]{Agrawal1993}. As a consequence, it is challenging to implement feedback control, and thus designs and operating modes that are intrinsically stable and robust are of interest. 

Parameter variations that lead to a significant change in the behavior of semiconductor laser dynamics can be associated with the occurrence of bifurcations. While a Hopf bifurcation is desirable at the initial lasing threshold, further bifurcations might render the single-wavelength operation unstable.
Since a loss of stability involves a real or conjugate pair of eigenvalues crossing the imaginary axis into the right half complex plane, it is obvious to search for laser parameters that result in modes of operations with eigenvalues with sufficiently negative real parts. 
Vanbiervliet et al.~\cite{Vanbiervliet2008} use nonlinear programming methods to find laser diode parameters that result in the minimal (i.e., maximally negative) leading eigenvalue.

The method proposed in the present paper is also based on nonlinear programming and on characterizing stability and robustness with the distance of the leading eigenvalue to the imaginary axis. Our method is fundamentally different from the one by Vanbiervliet et al.~\cite{Vanbiervliet2008}, however. 
We do not use the cost function to enforce the desired eigenvalues, but 
we impose additional constraints that establish robust stability (see Section~\ref{sec:NVMethod}). 
Moreover, we use the distance to the closest bifurcation in the parameter space as a measure for robustness (see the next paragraph for a brief explanation). 
Since the cost function is not required to enforce stability, other optimization goals can be stated; in our case this is the laser intensity. 
Essentially, we can systematically determine the mode of the highest laser intensity that is sufficiently robust by optimizing with respect to the laser intensity and enforcing a lower bound on the distance to all stability boundaries. Note that this is different from determining the most robust mode by pushing all eigenvalues as far into the left half of the complex plane as possible.

The distance of a candidate steady state of operation to the closest bifucation manifold can serve as a measure for robustness. Dobson showed this distance occurs along a particular normal vector to the bifurcation manifold~\cite{Dobson1993}. Mönnigmann and Marquardt incorporated these normal vectors to state robustness in economic optimization problems~\cite{Monnigmann2002}. Their normal vector approach has been extended in different contexts, for example, to achieve specified transient behavior~\cite{Gerhard2008}, to the stability of periodically operated systems \cite{Kastsian2010} and, more recently, to delayed systems~\cite{Kastsian2013, Otten2016b, Otten2016a}. The Lang-Kobayashi type laser diode models, which serve as examples in the present paper, belong to the latter problem class.

After introducing the system class in the remainder of Section~\ref{sec:Intro}, we present a sample system, a laser model, and discuss the optimization task in Section~\ref{sec:LaserProblemOutline}. 
In Section~\ref{sec:NVMethod}, we propose normal vector constraints as an instrument for achieving robust stability and derive constraints for robust asymptotic and robust exponential stability. We apply these normal vector constraints to the laser optimization in Section~\ref{sec:LaserNVOptim}. Conclusions and an outlook are given in Section~\ref{sec:Conclusion}. 

\subsection{System Class and notation}
We consider the class of delay differential equations (DDEs) with multiple uncertain and state dependent delays
\begin{equation}\label{eq:sys}
  \dot x(t)=f(x(t),x(t-\tau_1),\dots,x(t-\tau_m),\alpha)
\end{equation}
with the state vector $x \in \mathbb{R}^{n_x}$, 
uncertain parameters $\alpha \in \mathbb{R}^{n_\alpha}$, where $f$ is smooth and maps from $\mathbb{R}^{n_x(m+1)}\times \mathbb{R}^{n_\alpha}$, or an open subset thereof, into $\mathbb{R}^{n_x}$.
We assume there exist $m$ state and parameter dependent delays
\begin{equation}\label{eq:delay}
\tau_i=\tau_i(x(t),\alpha),\, i= 1, \dots, m.
\end{equation}

We refer to $\lambda \in \mathbb{C}$ as an eigenvalue at a steady state $ x $ of \eqref{eq:sys}, if
\begin{equation}\label{eq:det}
\mathrm{det}\left(\lambda I - A_0 -\sum_{i=1}^mA_i\exp(-\lambda \tau_i)\right)=0\,,
\end{equation}
where $A_0$ and $A_i$ are the Jacobians of the right hand side of \eqref{eq:sys} with respect to $x(t)$ and $x(t-\tau_i)$ respectively (see, e.g., Engelborghs and Roose \cite{Engelborghs1999}).
The delay $\tau_0=0$ is merely introduced to simplify the notation. This allows us to replace $x(t)$ by $x(t-\tau_0)$ and refer to $x(t-\tau_i)$, $i= 0, \dots, m$ instead of referring to $x(t)$ and $x(t-t_i)$, $i= 1, \dots, m$ separately.  
Furthermore, we introduce the abbreviations
\begin{subequations}
\begin{align}
\expsin(\sigma,\omega,\tau)&=\exp(-\sigma\tau)\sin(\omega\tau)\\
\expcos(\sigma,\omega,\tau)&=\exp(-\sigma\tau)\cos(\omega\tau)\,.
\end{align}
\end{subequations}
A solution of a set of nonlinear equations is called regular if the Jacobian of the nonlinear equations evaluated at this solution has full rank. 

\section{Laser diode model and problem outline}\label{sec:LaserProblemOutline}
We use a laser diode model to motivate the use of the normal vector method to be introduced in Section~\ref{sec:NVMethod}. 
The model was proposed by Verheyden et al.~\cite{Verheyden2004}. 
It is of the Lang-Kobayashi type and models the laser dynamics in the presence of an external cavity. 
The external cavity feeds a small fraction of the emitted light back into the semiconductor with a delay, thus necessitating a delay differential equation (DDE). 
Since the semiconductor is represented by a diffusion equation, the overall system model requires coupling a delay differential to a partial differential equation. We use a method-of-lines approximation of the partial differential equation as proposed in~\cite{Verheyden2004}.
A technical complication arises because of the rotational symmetry of the steady state solutions of the Lang-Kobayashi model~\cite{Haegeman2002,Krauskopf2000}. 

\subsection{Dynamical laser diode model}\label{sec:Model}
\begin{subequations}\label{eq:ODECarrierModelRot}
The electrical field $E(t)\in\mathbb{C}$ of the laser diode can be modeled with 
\begin{align}
\frac{\mathrm{d}A(t)}{\mathrm{d}t}=&-\mathrm{i} \Omega A(t) + (1-\mathrm{i}\alpha_{LW})A(t)\zeta(t)\nonumber\\
&+\eta\,\e^{-\mathrm{i}(\phi-\Omega \tau)} A(t-\tau)\label{eq:discElectricalField}\,,
\end{align}
where $E(t)=A(t)\exp(\mathrm{i}\Omega t)$. 
The variable $A(t)\in\mathbb{C}$ measures the electrical field in rotating coordinates and captures harmonic oscillations in $E(t)$ (i.e., a single wavelength operation) as a steady state of the frequency $\Omega$.  
Model parameters are given in Table~\ref{tab:Params}.
\begin{table}
	\begin{tabular}{llrrr}
		\hline
		symbol &parameter& uncertainty & nom. value\\
		\hline
		$j$& pump current &  $\pm 0.01 $& $j^{(0)}$, s.t. opt.\\
		$\eta$ & feedback strength  &  $\pm 0.0001 $ & $\eta^{(0)}$, s.t. opt.\\
		$\alpha_{LW}$& linewidth enhancement factor&  $\pm 1 $& as in \cite{Verheyden2004}\\
		$\phi$ & feedback phase&$\pm0.1\pi$ & as in \cite{Verheyden2004}\\
		$\tau$&  feedback delay&  $\pm 10 $ & $\tau^{(0)}$, s.t. opt. \\
		$T$ & carrier to photon lifetime  &  $\pm 10 $& as in \cite{Verheyden2004}\\
		$d$& diffusion constant &   $\pm 0.001 $&as in \cite{Verheyden2004}\\
		\hline
	\end{tabular}
	\caption{System parameters. Nominal values are taken from Verheyden et al.~\cite{Verheyden2004}. The abbreviations 's.t.' and 'opt.' are short for 'subject to' and 'optimization', respectively.}\label{tab:Params} 
\end{table}

The energy powering the laser is supplied through the semiconductor. The partial differential equation modeling the carrier diffusion can be spatially discretized with a method-of-lines approach. According to Verwer and Sanz-Serna~\cite{Verwer1984}, the discretization converges to the underlying PDE solution for a sufficient number of discrete points. Let $k = 1,2,...,K$  refer to the points resulting from the spatial discretization.
Then
\begin{align}
T\frac{\mathrm{d} N_k(t)}{\mathrm{d}t}=&d \frac{ N_{k-1}(t) - 2  N_k(t) + N_{k+1}(t)}{h^2}-N_k(t)\nonumber\\
& +P_k- F_k[1+2\,N_k(t)]|A(t)|^2\label{eq:discCarrier}
\end{align}
result for the discretized carrier density $N_k(t)$ 
in the interior ($k=2,3,\dots, K-1$),
where the constant $h=\frac{0.5}{K-1}$ describes the distance of the discrete spatial points.
Assuming Neumann boundary conditions, the remaining two equations for $k=1$ and $k=K$ are
\begin{align}
T\frac{\mathrm{d} N_1(t)}{\mathrm{d}t}=&d \frac{ N_2(t) -  N_1(t)}{h^2}-N_1(t)\nonumber\\
& +P_1- F_1[1+2\,N_1(t)]|A(t)|^2\label{eq:discBounds1}
\end{align}
and
\begin{align}
T\frac{\mathrm{d} N_K(t)}{\mathrm{d}t}=&d \frac{ N_{K-1}(t) -  N_K(t)}{h^2}-N_K(t)\nonumber\\
& +P_K- F_K[1+2\,N_K(t)]|A(t)|^2\label{eq:discBoundsl}\,.
\end{align}

The carrier diffusion is driven by an anisotropic pump current. Its discretized spatial distribution is
\begin{equation}
P_k=\left\{
\begin{array}{ll}
1.075j& \text{ for } |k\cdot h| < 0.2 \\
-0.8& \text { else }
\end{array}
\right\},~k=1,\dots,K\,.
\end{equation}
The modal gain
\begin{equation}
\zeta(t)= 2h\sqrt{\frac{400}{\pi}}\sum_{k=1}^{K} F_k N_k(t)
\end{equation}
depends on the spatial distribution of the electrical field 
\begin{equation}
F_k=  \exp(-400(k\cdot h)^2), k=1,\dots,K\,.
\end{equation}
 
When stated in rotating coordinates as in~\eqref{eq:discElectricalField}, the rotational symmetry of the Lang-Kobayashi model (see~\cite{Krauskopf2000}) is evident from the fact that $A(t)\exp(i\theta)$ is a steady state solution to~\eqref{eq:discElectricalField} for any $\theta\in\R$, if $A(t)\in\mathbb{C}$ is a steady state solution. 
We follow Verheyden et al.~\cite{Verheyden2004} in removing this indeterminacy with the additional condition
\begin{equation}\label{eq:phaseCondLaser}
\real{A(t)} -\imag{A(t)}=0. 
\end{equation}
\end{subequations}
It is known that the stability properties of the steady states are not affected by this additional algebraic equation (see~\cite{Haegeman2002}, Section~II). 
Consequently, steady states of~\eqref{eq:sys} correspond to solutions $0= f(x, x, \alpha)$ of the $n_x= K+3$ equations
\begin{align*}
  f(x(t), x(t-\tau), \alpha)= \begin{bmatrix} \eqref{eq:discElectricalField} \\ \vdots \\ \eqref{eq:phaseCondLaser}\end{bmatrix}
\end{align*}
in the $n_x$ variables 
$x= [\real{A}, \imag{A}, N_1, \dots, N_K, \Omega]^\prime$.
The model has $n_\alpha = 7$ parameters $\alpha = [j, \eta, \alpha_{LW}, \phi, \tau, T, d]^\prime$. We consider all parameters to be uncertain in the sense that they are known up to an error bar only. The uncertainties or error bars are given in Table~\ref{tab:Params}. 

\subsection{Optimization problem outline}\label{ssec:Optimization}
It is the objective of the following optimization to find the parameter configuration for which the laser intensity becomes maximal. A high laser intensity is important for applications relying on the high power density of lasers such as laser cutting \cite{Mas2003} or laser cladding \cite{Thompson2015}. We can state the task of maximizing the stationary laser intensity as
\begin{subequations}\label{eq:optimProb}
\begin{align}
\min_{x,j, \eta, \tau}~-|A|^2&\label{eq:optimProbCost}\\
\text{s.\,t.}\quad0 &= f(x,x,\alpha)\label{eq:optimProbStSt}
\end{align}
where we assume the optimal values of parameters pump current $j$, feedback strength $\eta$ and feedback delay $\tau$  can be determined by the optimization. Note that these parameters are chosen by the optimization and simultaneously assumed to be uncertain, which reflects that the optimal parameters can only be fixed up to an error bar, just as any fixed parameter can only be determined up to an error bar. 
We choose the parameters $j$, $\eta$ and $\tau$ for an optimization, because they can be changed after the laser has been manufactured. We assume they can be chosen freely as long as lower and upper bounds are met, which read 
\begin{align}
	0.02 \leq & j \leq 1.0265  \label{eq:NonrobustConstraintj} \\
	0.0001 \leq &\eta \leq 0.02  \label{eq:NonrobustConstrainteta} \\
	220 \leq &\tau \leq 2000\,. \label{eq:NonrobustConstrainttau}
\end{align}
\end{subequations}
The bounds on the feedback strength $\eta$ model limitations in the coating process for the semi-reflective mirror at the end of the external cavity. Bounds on $\tau$ are necessary to account for geometric limitations of the external cavity length. The pump current bounds represent a limited range of a current source.

\subsection{Limitations of naive steady state optimizations of dynamical systems}
\subsubsection{Optimization without stability constraints result in unstable operation}\label{sssec:UnStabResults}
The numerical solution of the the optimization problem \eqref{eq:optimProb} for $K=31$ results in the optimal parameters  $j=1.0265$, $\eta = 0.02$ and $\tau=220$. These parameters correspond to a stationary intensity of $|A|^2 = 6.6622$. While optimal, this steady state is unstable, which is evident from Figure~\ref{fig:simUnStable}. 
\begin{figure}[hbtp]
	\center
	\includegraphics[]{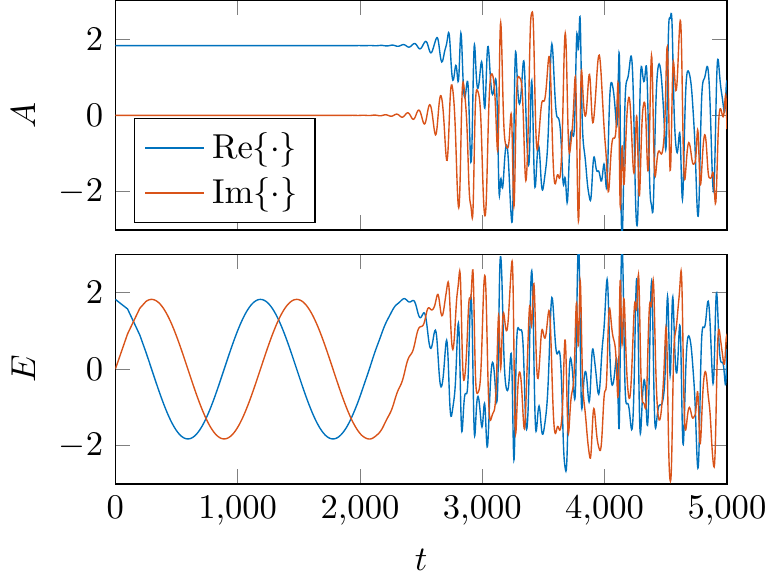}
	\caption{Simulation of laser diode \eqref{eq:ODECarrierModelRot} for $K=31$ at the parameters that result for the optimization~\eqref{eq:optimProb} without stability and robustness constraints. The initial steady state is unstable and the system evolves quasi-chaotically after a small disturbances due to numerical inaccuracy.  
	Results are shown both in rotating coordinates $A$ and fixed coordinates $E$.
	\label{fig:simUnStable}}
\end{figure}

This result illustrates an optimization such as~\eqref{eq:optimProb} that ignores the stability properties of the dynamical system is likely to fail,
because 
an optimal but unstable, or more generally, an optimal but not sufficiently robust mode of operation is useless.  
Arguably, the intensity could first be optimized, and the resulting steady state could be stabilized by adding feedback control a posteriori. 
For systems with very fast intrinsic time scales this is not a viable option, however. Moreover, even if such an a posteriori stabilization was possible, it would raise the question whether there exists a competitive intrinsically stable and robust mode of operation that does not require adding a controller device to the laser diode system.
The remainder of the paper is devoted to an optimization method that systematically takes stability and robustness into account. 

\section{ Normal vector method }\label{sec:NVMethod}
We give an informal introduction to the simple geometric idea of the proposed method first. Subsequent sections then use more precise notions from bifurcation theory. 
\subsection{ Normal vectors to enforce stability } \label{ssec:NVMotivation}

A steady state is asymptotically stable, if the eigenvalues defined by \eqref{eq:det} are all in the left half-plane, i.e., 
\begin{equation}\label{eq:eigAsymptStabCond}
\mathrm{Re}\{\lambda_i\} < 0 \text{ for all } \lambda_i \,.
\end{equation} 
The system loses stability if a variation of the parameters $\alpha_i$ causes one or more eigenvalues to cross from the left into the right half plane. 
Assuming no higher codimension bifurcation points appear, the critical points constitute a manifold that locally separates the parameter space into a stable and an unstable part \cite{Kuznetsov1998} (see Figure~\ref{fig:NVSketch}). 
We sometimes prefer to enforce exponential stability with a prescribed decay rate $\sigma< 0$ and replace~\eqref{eq:eigAsymptStabCond} by
\begin{equation}\label{eq:eigExpStabCond}
\mathrm{Re}\{\lambda_i\} <\sigma< 0~\text{ for all }\lambda_i \,.
\end{equation}
In this case the existence of 
real eigenvalue or complex conjugate pair of eigenvalues with real part $\sigma$ defines the separating manifold. We call this manifold \emph{critical manifold} in both cases, \eqref{eq:eigAsymptStabCond} and~\eqref{eq:eigExpStabCond}, by a slight abuse of terminology. 

The closest distance of a point $\alpha^{(0)}$ in the parameter space to the critical manifold can serve as a measure for robustness
(see Figure~\ref{fig:NVSketch}).  
The closest distance occurs along a direction that is normal to the critical manifold~\cite{Dobson1993}. 
This direction and the closest critical point are labeled $r$ and $\alpha^{(c)}$, respectively, in Figure~\ref{fig:NVSketch}. 
It is evident from the figure 
that the distance $d$ between $\alpha^{(0)}$ and $\alpha^{(c)}$ respects
\begin{equation}
\alpha^{(0)}-\alpha^{(c)} + d\frac{r}{||r||} = 0\,. 
\end{equation}
Stability can be enforced with the constraint $d>0$ in an optimization. 
\begin{figure}[hbtp]
	\center
	\includegraphics[]{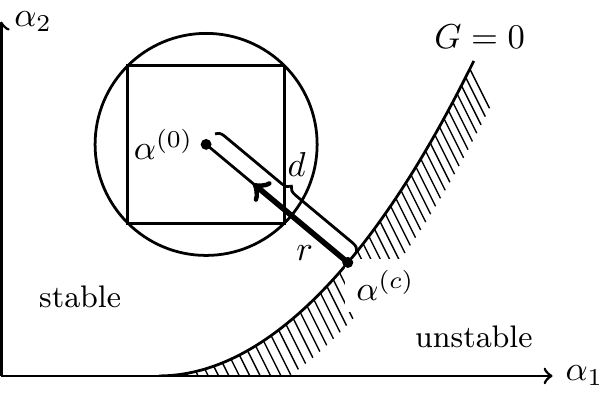}
	\caption{\label{fig:NVSketch} 
	Basic idea of the normal vector method. 
	The line passing through the candidate point of operation $\alpha^{(0)}$ and the closest critical point $\alphacrit$ is normal to the critical manifold.}
\end{figure}
Robustness can also be enforced easily as follows. We assume 
the uncertainty is quantified by error bars around the nominal values $\alpha^{(0)}$
\begin{equation}
	\alpha_i\in[\alpha_i^{(0)} -\Delta \alpha_i,\alpha_i^{(0)} +\Delta \alpha_i]\,,\label{eq:uncertaintyHyperrectangle}
\end{equation}
where $\Delta \alpha_i$ measures the uncertainty of the respective $\alpha_i$. Rescaling the parameter space by these
uncertainty intervals\footnote{Alternatively, the $\Delta\alpha_i$ can be used to define a metric.}
transforms the hyperrectangular uncertainty region \eqref{eq:uncertaintyHyperrectangle} into a hypercube,
\begin{equation}
	\frac{\alpha_i}{\Delta \alpha_i}\in\left[\frac{\alpha_i^{(0)}}{\Delta \alpha_i} -1,\frac{\alpha_i^{(0)}}{\Delta \alpha_i} +1\right]\label{eq:uncertaintyHypercube}\,.
\end{equation}
Enforcing a parametric distance of $d>\sqrt{n_\alpha}$ leads to a robustly stable steady state, because the uncertainty region \eqref{eq:uncertaintyHypercube} is enclosed in a hypersphere of radius $\sqrt{n_\alpha}$ that cannot cross the stability boundary~\cite{Monnigmann2002}. The geometry of the hypercubic uncertainty region, the enclosing hypersphere and the stability boundary are  illustrated in Figure~\ref{fig:NVSketch} for $n_\alpha=2$.

Applied bifurcation theory states systems of equations that characterize critical manifolds for the case~\eqref{eq:eigAsymptStabCond} in the form of \textit{augmented systems} (see, e.g., \cite{Kuznetsov1998}). 
They have the form   
\begin{equation}\label{eq:generalAugSys}
0=G(x^{(c)},\alpha^{(c)},u^{(c)})\,,
\end{equation}
where $x^{(c)}$ is a steady state on the stability boundary for the critical parameters $\alpha^{(c)}$. 
The vector $u^{(c)}$ collects various auxiliary variables such as eigenvectors that belong to the critical eigenvalues. 
We also use the term \textit{augmented systems} for systems of equations used to characterize critical manifolds for~\eqref{eq:eigExpStabCond}. 
Section~\ref{ssec:NVSysDeriv} explains how to extend these systems for the calculation of the normal vectors $r$ required for the robustness constraints introduced with Figure~\ref{fig:NVSketch}. 
These extensions, which can be found with a procedure stated in~\cite{Monnigmann2002}, result in systems of nonlinear equations of the form 
%
\begin{equation}\label{eq:generalNVSys}
0=H(x^{(c)},\alpha^{(c)},u^{(c)},\kappa,r)\,,
\end{equation}
where $r\in\mathbb{R}^{n_\alpha}$ is the normal vector at the critical point $x^{(c)}$, $\alpha^{(c)}$ and $u^{(c)}$ from \eqref{eq:generalAugSys}. The vector $\kappa$ collects additional auxiliary variables. Just as in~\eqref{eq:generalAugSys}, the number of equations in $0= G$ and $0= H$ depends on the type of critical point. 
This particular types of interest here are treated in the next section. 

%
%
%

\subsection{Derivation of normal vector systems}\label{ssec:NVSysDeriv}
If a steady state belongs to a critical manifold for stability, there exists a zero eigenvalue or a complex conjugate pair on the imaginary axis. Critical manifolds for stability can therefore be characterized with the augmented systems for fold and Hopf bifurcations. 
Critical manifolds of steady states with a prescribed decay rate $\sigma< 0$ can be characterized correspondingly, i.e., by augmented systems that state the existence of a real eigenvalue $\sigma$ or a complex conjugate pair with real part $\sigma$. Because these  systems obviously resemble those for fold and Hopf bifurcations closely, we refer to them as augmented systems for modified fold and modified Hopf points, respectively.

\subsubsection{Normal vectors system for fold and modified fold manifolds}
Assume $\xcrit$ is a steady state for parameter values $\alpha^{(c)}$. 
If $\sigma$ is real and the leading eigenvalue at the steady state $\xcrit$, then there exists a $ w \in \mathbb{R}^n$ such that $\xcrit$, $\alpha^{(c)}$, $a$ and $b$ obey the following equations, which are a simple extension of the standard augmented system for saddle-node bifurcations of delay differential equations (see, e.g., \cite{Engelborghs2002}):
\begin{subequations}\label{eq:ModfoldMani}
  \begin{align}
    f(\xcrit,...,\xcrit,\alphacrit)&=0
    \label{eq:GeneralizedFoldNVSystem-SteadyState}
    \\
    \sigma w-A_0\, w-\sum_{i=1}^m A_i \exp(-\sigma\tau_i)\,w&=0
    \label{eq:GeneralizedFoldNVSystem-CriticalEigenvalue}
    \\
    w^\prime w-1&=0
    \label{eq:GeneralizedFoldNVSystem-Regularization}
      \end{align}
\end{subequations}
  Equations~\eqref{eq:GeneralizedFoldNVSystem-SteadyState} holds, because
  $\xcrit$ is a steady state by assumption. 
  Since $\sigma$ is a real eigenvalue by assumption, \eqref{eq:det} holds. Consequently, there exists an eigenvector $w \in \R^n$ that satisfies~\eqref{eq:GeneralizedFoldNVSystem-CriticalEigenvalue}. 
  Equation \eqref{eq:GeneralizedFoldNVSystem-Regularization} normalizes this eigenvector to unit length. 

The following proposition extends a result stated in~\cite{Otten2016a} to the case with delays that depend on uncertain parameters $\alpha$. 
\begin{proposition}[normal vector to manifold of modified fold points]\label{prop:ModFoldNV}
Let $\sigma\leq0$ be arbitrary but fixed. 
Assume that  $( \xcrit, \alpha^{(c)}, w)$ is a regular solution to \eqref{eq:ModfoldMani} in the $2 n_x +1$ variables  $ \xcrit, w$ and one of the elements of $\alpha^{(c)}$. Then $r$ that obeys the following equations is normal to the manifold of modified fold points at this solution:
\begin{subequations}
\begin{align}
	\text{Equations~\eqref{eq:ModfoldMani}} \\
    \begin{bmatrix}
		\nabla_{\xcrit}f^\prime
		& B_{12} 
		& 0 
		\\
		0
		& B_{22} 
		& 2w 
    \end{bmatrix}\kappa
      & = 0
      \label{eq:GeneralizedFoldNVSystem-NVSpan}
      \\
      \begin{bmatrix}
	    \nabla_{\alpha}f^\prime
	    & B_{32}
	    &0
	    \\
      \end{bmatrix}\kappa
      -r
      & = 0 
      \label{eq:GeneralizedFoldNVSystem-NVProjection}
      \\
      r^\prime r- 1&=0
      \label{eq:GeneralizedFoldNVSystem-Normalization}\,  ,
  \end{align}
\end{subequations}
where
\begin{subequations}
	\begin{align}
	    B_{12}=&\sum_{k=0}^m \exp(-\sigma \tau_k) \Big(
		    \sigma\left[\nabla_{\xcrit}\tau_k\right]w^\prime A_k ^\prime 
		    -\left[\nabla_{\xcrit}w^\prime A_k^\prime \right]
		\Big)
		\label{eq:B12SaddleNode}
	    \\
	    B_{22}=& \sigma I-\sum_{k=0}^m \exp(-\sigma \tau_k) A_k^\prime  
	    \label{eq:B22SaddleNode}
	    \\ 
	    B_{32}=
	    &\sum_{k=0}^m \exp(-\sigma \tau_k)\Big(
	    		\sigma\left[\nabla_{\alpha^{\text{(c)}}}\tau_k\right]w^\prime A_k^\prime
	    		-\left[\nabla_{\alpha^\text{(c)}}w^\prime A_k^\prime \right]
	    	\Big)\,.
	    	\label{eq:B32SaddleNode}
	\end{align}
\end{subequations}
\end{proposition}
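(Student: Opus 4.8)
The plan is to read the augmented system \eqref{eq:ModfoldMani} as a single nonlinear map $0=G(\xcrit,w,\alphacrit)$, where $G=(G_1,G_2,G_3)$ collects the left-hand sides of \eqref{eq:GeneralizedFoldNVSystem-SteadyState}, \eqref{eq:GeneralizedFoldNVSystem-CriticalEigenvalue} and \eqref{eq:GeneralizedFoldNVSystem-Regularization}, i.e.\ $2n_x+1$ equations in the $2n_x+n_\alpha$ variables $(\xcrit,w,\alphacrit)$, and to reduce the claim to the elementary fact that a vector $r\in\mathbb{R}^{n_\alpha}$ is normal to the manifold of modified fold points exactly when the vector whose blocks are $(0,0,r)$ in the $(\xcrit,w,\alphacrit)$ splitting lies in the range of $(\nabla G)^\prime$ --- which is precisely what \eqref{eq:GeneralizedFoldNVSystem-NVSpan}--\eqref{eq:GeneralizedFoldNVSystem-NVProjection} express.

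First I would exploit the regularity hypothesis. Since the square submatrix of $\nabla G$ formed by the $\xcrit$-, $w$- and one $\alpha$-column is nonsingular, $\nabla G$ has full row rank $2n_x+1$; hence, near the given solution, $\mathcal M:=\{G=0\}$ is a smooth manifold of dimension $n_\alpha-1$ with $T\mathcal M=\ker\nabla G$, and by the implicit function theorem its image $\mathcal C$ under the projection $(\xcrit,w,\alphacrit)\mapsto\alphacrit$ is a codimension-one submanifold of parameter space (a graph over the remaining $n_\alpha-1$ parameters) that coincides with the manifold of modified fold points, the projection restricting to a diffeomorphism $\mathcal M\to\mathcal C$. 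Consequently $T_{\alphacrit}\mathcal C$ is exactly the set of $\delta\alpha$ occurring in elements $(\delta\xcrit,\delta w,\delta\alpha)$ of $T\mathcal M$. Because $(0,0,r)^\prime(\delta\xcrit,\delta w,\delta\alpha)=r^\prime\delta\alpha$, the vector $(0,0,r)$ annihilates $T\mathcal M$ iff $r$ annihilates $T_{\alphacrit}\mathcal C$; and since $\nabla G$ has full row rank, $(T\mathcal M)^\perp=(\ker\nabla G)^\perp=\mathrm{range}\big((\nabla G)^\prime\big)$. Thus $r$ is normal to $\mathcal C$ iff there is a $\kappa\in\mathbb{R}^{2n_x+1}$ with $(\nabla G)^\prime\kappa=(0,0,r)$, and such a $\kappa$ --- hence a nonzero $r$ --- exists because the normal space of the codimension-one manifold $\mathcal C$ is one-dimensional.

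It then remains to identify the three block rows of $(\nabla G)^\prime$, i.e.\ to differentiate \eqref{eq:GeneralizedFoldNVSystem-SteadyState}--\eqref{eq:GeneralizedFoldNVSystem-Regularization}. Differentiating $G_1$ gives $\nabla_{\xcrit}f$ and $\nabla_\alpha f$ and nothing in the $w$-direction --- at a steady state the delayed arguments all equal $\xcrit$ irrespective of the $\tau_k$, so the state/parameter dependence of the delays contributes nothing here. Differentiating $G_2$ with respect to $w$ gives $\sigma I-\sum_{k=0}^m A_k\exp(-\sigma\tau_k)$, whose transpose is $B_{22}$; differentiating $G_2$ with respect to $\xcrit$ and $\alphacrit$ requires the product rule on $A_k(\xcrit,\alpha)\exp(-\sigma\tau_k(\xcrit,\alpha))w$, producing a term from $\nabla(A_k w)$ and a term $\sigma(A_k w)(\nabla\tau_k)^\prime$ from the exponential, whose transposes are exactly $B_{12}$ and $B_{32}$ --- the $\nabla\tau_k$-terms being the contribution that is new relative to \cite{Otten2016a}. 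Finally $\nabla_w G_3=2w^\prime$. Assembling, the $\xcrit$-, $w$- and $\alpha$-blocks of $(\nabla G)^\prime\kappa$ are $[\nabla_{\xcrit}f^\prime\ B_{12}\ 0]\kappa$, $[0\ B_{22}\ 2w]\kappa$ and $[\nabla_\alpha f^\prime\ B_{32}\ 0]\kappa$; hence \eqref{eq:GeneralizedFoldNVSystem-NVSpan} says the first two blocks vanish, \eqref{eq:GeneralizedFoldNVSystem-NVProjection} identifies the third with $r$, i.e.\ together they assert $(\nabla G)^\prime\kappa=(0,0,r)$, and \eqref{eq:GeneralizedFoldNVSystem-Normalization} merely excludes $r=0$.

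The main obstacle will be the bookkeeping in the last step: verifying that the transposed $\xcrit$- and $\alphacrit$-derivatives of the eigenvalue equation really collapse to the stated $B_{12}$ and $B_{32}$, in particular applying the product rule between $A_k$ and $\exp(-\sigma\tau_k)$ correctly and tracking which arguments each delay depends on. Everything else is the standard normal-vector-system construction of \cite{Monnigmann2002}, specialized to \eqref{eq:ModfoldMani}.
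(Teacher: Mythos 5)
Your proposal is correct and follows essentially the same route as the paper's proof: regularity gives an $(n_\alpha-1)$-dimensional solution manifold, the columns of the transposed Jacobian $B$ span its normal space, and \eqref{eq:GeneralizedFoldNVSystem-NVSpan}--\eqref{eq:GeneralizedFoldNVSystem-NVProjection} select the combination $B\kappa=(0,0,r)$ whose only nonzero block lies in the $\alpha$-directions, with the same block-by-block differentiation yielding $B_{12}$, $B_{22}$, $B_{32}$. Your explicit justification that $r$ annihilates $T_{\alphacrit}\mathcal{C}$ iff $(0,0,r)$ lies in $\mathrm{range}\big((\nabla G)^\prime\big)$ is a welcome sharpening of a step the paper treats more informally, but it is not a different argument.
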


\begin{proof}
Consider \eqref{eq:ModfoldMani} as $2n + 1$ equations in the $2n+n_\alpha$ variables $ \xcrit$, $w$ and $ \alpha^{(c)}$.  
These equations  define an $(n_\alpha-1)$-dimensional manifold of modified fold points in the neighborhood of the known regular solution. 
Evaluated at any point on this manifold, 
the rows of the Jacobian of \eqref{eq:ModfoldMani} with respect to $\xcrit$, $w$ and $\alpha^{(c)}$ span the normal space to the manifold \cite{Monnigmann2002}\cite[Chapter\,3]{Strang2006}.
Since we would like to span the normal space with column vectors, it is more convenient to work with the transposed Jacobian, which we denote $B$. 
Formally, $B$ can be stated as the outer product
\begin{align*}
 B
=&\begin{bmatrix}
	\nabla_{ \xcrit}\\
	\nabla_{ w }\\
	\nabla_{ \alpha^{(c)}}\\
\end{bmatrix}
\begin{bmatrix}
	 f( \xcrit,..., \xcrit, \alpha^{(c)})
	 \\
	 \sigma w  -\sum_{k=0}^m \exp(-\sigma \tau_k) A_k w\\
	 w ^\prime w   -1
\end{bmatrix}^\prime\,
\end{align*} 
which results in the block matrix
\begin{align}\label{eq:BblockStructure}
	 B
	&= \begin{bmatrix}
	  \nabla_{x^{(c)}} f^\prime & B_{12} & 0  \\
	  0 & B_{22} & 2w  \\
	  \nabla_{\alpha^{(c)}} f^\prime & B_{32} & 0  	  
	  \end{bmatrix},
\end{align}
where the block columns contain $n$, $n$ and $1$ columns and the block rows contain $n$, $n$ and $n_\alpha$ rows, respectively. 
The elements in the second column of~\eqref{eq:BblockStructure} remain to be determined. 
The block $B_{22}$ 
is the Jacobian of $\sigma w-A_0^\prime\, w-\sum_{i=1}^m A_i^\prime \exp(-\sigma\tau_i)\,w$ with respect to $w$, which results in~\eqref{eq:B22SaddleNode}. 
The block matrices $B_{12}$ and $B_{32}$ in  \eqref{eq:BblockStructure} require the application of the chain and product rules, since 
$\tau_k$ is, as described in \eqref{eq:delay}, a function of $ \xcrit$ and $ \alpha^{(c)}$. 
Therefore, 
\begin{align*}
	 B_{12}=&\nabla_{ \xcrit} \left( \sigma w-\sum_{k=0}^m \exp(-\sigma\tau_k) A_k w \right)^\prime
	 =
	 -\sum_{k=0}^m \Big(
	 	\left[\nabla_{\xcrit} \exp(-\sigma\tau_k)\right] w^\prime A_k^\prime
	 	+\exp(-\sigma \tau_k) \nabla_{\xcrit} w^\prime A_k^\prime
	 \Big)
\end{align*}
which yields~\eqref{eq:B12SaddleNode} and $B_{32}$ as stated in~\eqref{eq:B32SaddleNode} results when $\nabla_{\xcrit}$ is replaced by $\nabla_{\alpha^{(c)}}$. 
The columns of $B$ span the normal space to the critical manifold in the space with the components $(x ,w, \alpha)$. Because we need to measure the distance to the critical manifold in the space of the parameters $\alpha$, we seek the particular normal direction that only has nonzero elements in the directions of the parameters $\alpha$. This corresponds to the linear combination $\kappa$ with 
\begin{equation*}
\begin{bmatrix}
	\nabla_{\xcrit}f^\prime&B_{12} & 0 \\
	0&B_{22}&2w \\
	\nabla_{ \alpha^{(c)}} f^\prime& B_{32}& 0 
      \end{bmatrix}\kappa = 
      \begin{bmatrix}
      0 \\ 0\\ r
      \end{bmatrix}\, , 
\end{equation*}
which is equivalent to~\eqref{eq:GeneralizedFoldNVSystem-NVSpan} and~\eqref{eq:GeneralizedFoldNVSystem-NVProjection}. These equations determine the normal direction $r$ up to its length. The last condition \eqref{eq:GeneralizedFoldNVSystem-Normalization} fixes the length of $r$. 
\end{proof}

The augmented system for a fold bifurcation manifold is the special case of \eqref{eq:ModfoldMani} with $\sigma = 0$, which
is the well-known augmented system (see, e.g., \cite{Kuznetsov1998})
\begin{subequations}\label{eq:FoldAug}
  \begin{align}
     f (\xcrit,...,\xcrit,\alpha^{(c)})&=0\label{eq:FoldSS}
    \\
    \sum_{k=0}^m A_k\,w&=0\label{eq:FoldEigVecDir}
    \\
    w^\prime w-1&=0\label{eq:FoldEigVecLength}\,.
  \end{align}
\end{subequations}
We can derive the normal vector system for the manifold defined by \eqref{eq:FoldAug} as a corollary to Prop.~\ref{prop:ModFoldNV}.  
\begin{corollary}[normal vector to manifold of fold bifurcations]
If  $(\xcrit$, $\alpha^{(c)}$, $w)$ is a regular solution to \eqref{eq:ModfoldMani} for $\sigma = 0 $ in the $2 n_x+1$ variables $x^{(x)}$, $w$ and one of the elements of $\alpha^{(c)}$, then $r$ that obeys the following equations is normal to the manifold of fold points at this solution:
\begin{subequations}\label{eq:FoldNVSystem}
\begin{align}
	\text{Equations~\eqref{eq:FoldAug}} \\
    \begin{bmatrix}
	\nabla_{\xcrit}f^\prime&-\sum_{k=0}^m \left[\nabla_{\xcrit} w^\prime A_k^\prime \right] & 0
	\\
	0&-\sum_{k=0}^mA_k^\prime &2w
      \end{bmatrix}\kappa
      & = 0 
      \\
      \begin{bmatrix}
	\nabla_{\alpha}f^\prime& -\sum_{k=0}^m \left[\nabla_{\alpha^\text{(c)}} w^\prime A_k^\prime \right] &0\\
      \end{bmatrix}\kappa
      -r
      & = 0 
      \\
      r^\prime r- 1&=0
  \end{align}
\end{subequations}
\end{corollary}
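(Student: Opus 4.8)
The plan is to obtain the corollary by specializing Proposition~\ref{prop:ModFoldNV} to the value $\sigma = 0$ and then simplifying the blocks $B_{12}$, $B_{22}$, $B_{32}$. First I would observe that the hypothesis of the corollary is precisely the hypothesis of the proposition with $\sigma = 0$ (regularity of the solution $(\xcrit,\alphacrit,w)$ to \eqref{eq:ModfoldMani} in the $2n_x+1$ variables consisting of $\xcrit$, $w$, and one component of $\alphacrit$), and that for $\sigma = 0$ the augmented system \eqref{eq:ModfoldMani} reduces termwise to \eqref{eq:FoldAug}: in \eqref{eq:GeneralizedFoldNVSystem-CriticalEigenvalue} the factor $\sigma w$ vanishes and each $\exp(-\sigma\tau_i)$ becomes $1$, so the equation collapses to $-\sum_{k=0}^m A_k w = 0$, which is \eqref{eq:FoldEigVecDir} up to an irrelevant sign. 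Hence the proposition applies verbatim.

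Next I would carry out the substitution $\sigma = 0$ into the three block matrices. In $B_{22} = \sigma I - \sum_{k=0}^m \exp(-\sigma\tau_k) A_k^\prime$ the term $\sigma I$ drops and every exponential becomes $1$, giving $B_{22} = -\sum_{k=0}^m A_k^\prime$, which is exactly the $(2,2)$ block appearing in \eqref{eq:FoldNVSystem}. In $B_{12}$ the first summand carries an explicit factor $\sigma$ and therefore vanishes at $\sigma = 0$, while in the second summand $\exp(-\sigma\tau_k)$ becomes $1$; thus $B_{12} = -\sum_{k=0}^m [\nabla_{\xcrit} w^\prime A_k^\prime]$, matching the $(1,2)$ block of \eqref{eq:FoldNVSystem}. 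The same reasoning applied to $B_{32}$ (the $\sigma[\nabla_{\alphacrit}\tau_k]w^\prime A_k^\prime$ summand dies, the exponentials become $1$) gives $B_{32} = -\sum_{k=0}^m [\nabla_{\alphacrit} w^\prime A_k^\prime]$. Substituting these three simplified blocks into \eqref{eq:GeneralizedFoldNVSystem-NVSpan}, \eqref{eq:GeneralizedFoldNVSystem-NVProjection} and \eqref{eq:GeneralizedFoldNVSystem-Normalization} reproduces exactly the system \eqref{eq:FoldNVSystem}, and the conclusion that $r$ is normal to the manifold is inherited directly from the proposition.

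There is essentially no hard step here; the argument is a routine specialization. The only points that require a moment of care are bookkeeping ones: checking that the sign discrepancy between \eqref{eq:GeneralizedFoldNVSystem-CriticalEigenvalue} at $\sigma=0$ and \eqref{eq:FoldEigVecDir} is immaterial (multiplying an eigenvector equation by $-1$ changes neither its solution set nor the span of the transposed Jacobian's rows, hence neither the normal space), and verifying that the regularity condition and the counting of variables (the $2n_x+1$ variables $\xcrit$, $w$, and one element of $\alphacrit$, against the $2n_x+1$ equations \eqref{eq:FoldAug}) transfer unchanged. If anything, the main obstacle is purely cosmetic: the corollary statement writes the hypothesis as a regular solution ``to \eqref{eq:ModfoldMani} for $\sigma = 0$'' and contains a typographical slip ($x^{(x)}$ for $\xcrit$), so one should silently read it as a regular solution to the equivalent system \eqref{eq:FoldAug}. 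With that understood, the proof is a one-line invocation of Proposition~\ref{prop:ModFoldNV} followed by the elementary simplification of the $B$-blocks displayed above.
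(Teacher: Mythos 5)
Your proposal is correct and matches the paper's own argument, which consists of the single remark that the corollary follows from Proposition~\ref{prop:ModFoldNV} by substituting $\sigma=0$; you simply make explicit the block simplifications and the harmless sign flip in \eqref{eq:FoldEigVecDir} that the paper leaves implicit.
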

The corollary follows directly from Prop.~\ref{prop:ModFoldNV} by substituting $\sigma=0$.

\subsubsection{Normal vector systems for Hopf and modified Hopf manifolds}\label{ssc:Hopf}
This section first briefly summarizes the augmented systems for Hopf and modified Hopf points as needed for the paper. The normal vector systems are derived subsequently. 

Assume $\xcrit$ is a steady state for parameter values $ \alpha^{(c)}$. 
If $\lambda=\sigma\pm\i\omega$ are the leading eigenvalues at this steady state, then there exists a $ w = a + \i b \in \mathbb{C}^n$  such that 
$x^{(c)}$, $\alpha^{(c)}$ and $w$  
obey the equations
\begin{subequations}\label{eq:GeneralizedHopfManifold}
\begin{align}
 f( \xcrit, \xcrit,..., \xcrit,\alpha^{(c)})&=0\label{eq:GeneralizedHopfManifoldSteady}\\
\sigma a -\omega b -\sum_{k=0}^m A_k \big(a\, \expcos(\sigma,\omega,\tau_k) +b\, \expsin(\sigma,\omega,\tau_k) \big)&=0\label{eq:GeneralizedHopfManifoldReal}\\
\sigma b + \omega a -\sum_{k=0}^m A_k\big(b\, \expcos(\sigma,\omega,\tau_k) -a\, \expsin(\sigma,\omega,\tau_k) \big)&=0\label{eq:GeneralizedHopfManifoldImag}\\
 a ^\prime a + b ^\prime b   -1&=0\label{eq:GeneralizedHopfManifoldLength}\\
 a^\prime b &=0\label{eq:GeneralizedHopfManifoldPhase}\,,
\end{align}
\end{subequations}
which are a simple extension of the augmented system for Hopf bifurcations of delay differential equations (see, e.g., \cite{Engelborghs2002}).
  Equations \eqref{eq:GeneralizedHopfManifoldSteady} hold, because  $x^{(c)}$ is a steady state by assumption. 
  Since $\lambda = \sigma \pm \mathrm{i}\omega $ are eigenvalues at this steady state by assumption, \eqref{eq:det} holds. Consequently, there exists a  $w = a + \i b \in \mathbb{C}^n$ such that 
\begin{equation*}
  \lambda  w -\sum_{k=0}^m \exp(-\lambda \tau_k)A_k w =0
\end{equation*}
and separating this equation into its real and imaginary part for $\lambda= \sigma+\i \omega$ yields~\eqref{eq:GeneralizedHopfManifoldReal} and \eqref{eq:GeneralizedHopfManifoldImag}, respectively. 
The eigenvector is normalized to unit length by \eqref{eq:GeneralizedHopfManifoldLength} and its complex phase is fixed by \eqref{eq:GeneralizedHopfManifoldPhase}.

The normal vector system can now be derived based on~\eqref{eq:GeneralizedHopfManifold}. 
\begin{proposition}[normal vector to manifold of modified Hopf points] \label{prop:ModHopfNV}
Let $\sigma\le 0$ be arbitrary but fixed. Let $\omega$, $a$ and $b$ be as in~\eqref{eq:GeneralizedHopfManifold}. 
Assume that  $(\xcrit, \alpha^{(c)},\omega, a, b)$ is a regular solution to \eqref{eq:GeneralizedHopfManifold} 
in the $3n_x+2$ variables $\xcrit, \omega, a, b $ and one of the elements of  $\alpha^{(c)}$. 
Then $r$ that obeys the following equations is normal to the manifold of modified Hopf points at this solution:
 \begin{subequations}\label{eq:GeneralizedHopfNV}
	\begin{align}
		\text{equations \eqref{eq:GeneralizedHopfManifold}}\label{eq:GeneralizedHopfNV1}
		\\
		\begin{bmatrix}
			\nabla_{ \xcrit} f^\prime& B_{12}& B_{13}&0&0
			\\
			0& B_{22}& B_{23}& 2 a & b \\
			0& B_{32}& B_{33}& 2 b & a \\
			0& B_{42}& B_{43}&   0 & 0 
		\end{bmatrix}\kappa&=0\label{eq:GeneralizedHopfNV2}
		\\
		\begin{bmatrix}
			\nabla_{ \alpha^{(c)}} f^\prime& B_{52}& B_{53}&0&0
		\end{bmatrix}\kappa- r&=0\label{eq:GeneralizedHopfNV3}
		\\
		 r^\prime  r-1 &=0\label{eq:NVNormingModHopf}
	\end{align} 
\end{subequations}
where
\begin{align*}
	B_{12}=&\sum_{k=0}^m \Big(\Big. 
	\sigma 
	\big[\nabla_{ \xcrit}\tau_k\big]
	\big[\expcos(\sigma,\omega,\tau_k) a ^\prime+\expsin(\sigma,\omega,\tau_k) b ^\prime\big] 
	A_k^\prime
	-
	\omega
	\big[\nabla_{ \xcrit}\tau_k\big]
	\big[\expcos(\sigma,\omega,\tau_k) b ^\prime-\expsin(\sigma,\omega,\tau_k) a ^\prime\big]
	A_k^\prime
	\\
	&-
	\expcos(\sigma,\omega,\tau_k)
	\big[\nabla_{ \xcrit} a ^\prime A_k^\prime\big]
	-\expsin(\sigma,\omega,\tau_k)
	\big[\nabla_{ \xcrit} b ^\prime A_k^\prime\big]
	\Big.\Big)\,,
	\\
	B_{13}=&\sum_{k=0}^m  
	\Big(\Big.
	\sigma
	\big[\nabla_{ \xcrit}\tau_k\big]
	\big[\expcos(\sigma,\omega,\tau_k) b ^\prime-\expsin(\sigma,\omega,\tau_k) a ^\prime\big]
	A_k^\prime
	+\omega
	\big[\nabla_{ \xcrit}\tau_k\big] 
	\big[\expsin(\sigma,\omega,\tau_k) b ^\prime+\expcos(\sigma,\omega,\tau_k) a ^\prime\big]
	A_k^\prime
	\\
	&-\expcos(\sigma,\omega,\tau_k)
	\big[\nabla_{ \xcrit} b ^\prime A_k^\prime\big]
	+\expsin(\sigma,\omega,\tau_k)
	\big[\nabla_{ \xcrit} a ^\prime A_k^\prime\big]
	\Big.\Big)\,,
\end{align*}
\begin{align*}
	 B_{22}&=\sigma I-\sum_{k=0}^m \expcos(\sigma,\omega,\tau_k)A_k^\prime\,,
	 \quad
	 B_{23}=\omega I+\sum_{k=0}^m \expsin(\sigma,\omega,\tau_k)A_k^\prime\,,
	 \\
	 B_{32}&=-\omega I-\sum_{k=0}^m \expsin(\sigma,\omega,\tau_k)A_k^\prime\,,
	 \quad
	 B_{33}=\sigma I-\sum_{k=0}^m \expcos(\sigma,\omega,\tau_k)A_k^\prime\,,
\end{align*}
\begin{align*}
 B_{42}=&- b ^\prime+\sum_{k=0}^m\tau_k
 \big[\expsin(\sigma,\omega,\tau_k) a ^\prime-\expcos(\sigma,\omega,\tau_k) b ^\prime)\big]
 A_k^\prime\,,
 \quad
 B_{43}=& a ^\prime+\sum_{k=0}^m \tau_k
 \big[\expsin(\sigma,\omega,\tau_k) b ^\prime+\expcos(\sigma,\omega,\tau_k) a ^\prime)\big]
 A_k^\prime\,,
\end{align*}
\begin{align*}
	B_{52}=
	&\sum_{k=0}^m 
	\Big(\Big.
	\sigma
	\big[\nabla_{ \alpha^{(c)}}\tau_k\big]
	\big[\expcos(\sigma,\omega,\tau_k) a ^\prime
	+\expsin(\sigma,\omega,\tau_k) b ^\prime\big]
	A_k^\prime
	+
	\omega
	\big[\nabla_{ \alpha^{(c)}}\tau_k\big]
	\big[\expsin(\sigma,\omega,\tau_k) a ^\prime
	-\expcos(\sigma,\omega,\tau_k) b ^\prime\big]
	A_k^\prime
	\\
	&- 
	\expcos(\sigma,\omega,\tau_k)
	\big[\nabla_{ \alpha^{(c)}}a^\prime A_k^\prime\big]
	-\expsin(\sigma,\omega,\tau_k)
	\big[\nabla_{ \alpha^{(c)}}b^\prime A_k^\prime\big]
	\Big.\Big)\,,
	\\
	B_{53}=
	&\sum_{k=0}^m
	\Big(\Big. 
	\sigma
	\big[\nabla_{ \alpha^{(c)}}\tau_k\big]
	\big[\expcos(\sigma,\omega,\tau_k) b ^\prime
	-\expsin(\sigma,\omega,\tau_k) a ^\prime\big]
	A_k^\prime
	+
	\omega
	\big[\nabla_{ \alpha^{(c)}}\tau_k\big]
	\big[\expsin(\sigma,\omega,\tau_k) b ^\prime
	+\expcos(\sigma,\omega,\tau_k) a ^\prime\big]
	A_k^\prime
	\\
	&-
	\expcos(\sigma,\omega,\tau_k)
	\big[\nabla_{ \alpha^{(c)}} b ^\prime A_k^\prime\big]
	+\expsin(\sigma,\omega,\tau_k)
	\big[\nabla_{ \alpha^{(c)}} a ^\prime A_k^\prime\big]
	\Big.\Big)
	\,.
\end{align*}
\end{proposition}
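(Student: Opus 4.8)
The plan is to follow the strategy used in the proof of Proposition~\ref{prop:ModFoldNV}, adapted to the larger augmented system~\eqref{eq:GeneralizedHopfManifold}. First I would read \eqref{eq:GeneralizedHopfManifold} as $3n_x+2$ scalar equations in the $3n_x+1+n_\alpha$ variables $\xcrit$, $\omega$, $a$, $b$ and $\alphacrit$. The regularity hypothesis (invertibility of the Jacobian with respect to $\xcrit$, $\omega$, $a$, $b$ and one component of $\alphacrit$) implies that the full Jacobian has rank $3n_x+2$, so \eqref{eq:GeneralizedHopfManifold} cuts out locally a smooth $(n_\alpha-1)$-dimensional manifold of modified Hopf points through the given solution, and at every point of it the $3n_x+2$ rows of the Jacobian are linearly independent and span the normal space (cf.~\cite{Monnigmann2002},~\cite[Chapter~3]{Strang2006}). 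As in the fold case it is more convenient to span the normal space by columns, so I would pass to the transposed Jacobian $B$.

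Next I would compute $B$ block by block, organizing it exactly as the block matrix appearing in \eqref{eq:GeneralizedHopfNV2}--\eqref{eq:GeneralizedHopfNV3}: block columns indexed by the five groups of equations \eqref{eq:GeneralizedHopfManifoldSteady}--\eqref{eq:GeneralizedHopfManifoldPhase} (sizes $n_x,n_x,n_x,1,1$), and block rows indexed by the variable groups $\xcrit$, $a$, $b$, $\omega$, $\alphacrit$ (sizes $n_x,n_x,n_x,1,n_\alpha$). The blocks $B_{22},B_{23},B_{32},B_{33}$ are the $a$- and $b$-derivatives of the real and imaginary parts of the eigenvalue equation and are immediate; the entries $2a$, $2b$, $b$, $a$ come from differentiating the normalization $a^\prime a+b^\prime b-1$ and the phase condition $a^\prime b$; the zero blocks in the first column reflect that $f$ does not depend on $\omega,a,b$, and the zeros in the last two columns reflect that the two scalar conditions depend on neither $\xcrit$ nor $\alphacrit$. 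The remaining blocks $B_{12},B_{13},B_{42},B_{43},B_{52},B_{53}$ require the chain and product rules, because each $A_k$ depends on $\xcrit$ and each $\tau_k=\tau_k(\xcrit,\alphacrit)$ enters through $\expcos(\sigma,\omega,\tau_k)$ and $\expsin(\sigma,\omega,\tau_k)$; the cleanest route is to differentiate the compact form $\lambda w-\sum_k\exp(-\lambda\tau_k)A_k w=0$ with $\lambda=\sigma+\i\omega$, then separate into real and imaginary parts, using $\partial_{\tau_k}\expcos(\sigma,\omega,\tau_k)=-\sigma\expcos(\sigma,\omega,\tau_k)-\omega\expsin(\sigma,\omega,\tau_k)$ and the analogous identity for $\expsin$, which reproduces the stated formulas (differentiation with respect to $\omega$ then gives $B_{42},B_{43}$, and replacing $\nabla_{\xcrit}$ by $\nabla_{\alphacrit}$ gives $B_{52},B_{53}$).

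Finally, since the columns of $B$ span the normal space in $(\xcrit,a,b,\omega,\alphacrit)$-coordinates but the distance to the critical manifold is to be measured in parameter space only, I would single out the linear combination $\kappa$ of those columns whose $\xcrit$-, $a$-, $b$- and $\omega$-components vanish; its $\alphacrit$-component is then, up to scaling, the desired normal vector $r$. Written out block-wise, the vanishing condition on the first four block rows is exactly \eqref{eq:GeneralizedHopfNV2} and the definition of $r$ is \eqref{eq:GeneralizedHopfNV3}; because $B$ has full column rank $3n_x+2$, any nonzero $\kappa$ satisfying \eqref{eq:GeneralizedHopfNV2} yields $B\kappa\neq0$ and hence $r\neq0$, so \eqref{eq:NVNormingModHopf} legitimately fixes the one remaining degree of freedom, the length of $r$. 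The main obstacle is purely computational: carrying the chain and product rules through the $\expcos$/$\expsin$ terms together with the $\xcrit$- and $\alpha$-dependence of the $A_k$ to obtain $B_{12},B_{13},B_{52},B_{53}$ without sign errors; once these are in hand, the argument is a direct transcription of the fold case.
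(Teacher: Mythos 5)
Your proposal is correct and follows essentially the same route as the paper's own proof: interpret \eqref{eq:GeneralizedHopfManifold} as $3n_x+2$ equations cutting out an $(n_\alpha-1)$-dimensional manifold, span the normal space with the columns of the transposed Jacobian $B$, compute the blocks via the chain and product rules (the $\expcos$/$\expsin$ derivative identities), and select the linear combination $\kappa$ whose only nonzero components lie in the $\alpha$-directions, normalized by \eqref{eq:NVNormingModHopf}. Your added observation that full column rank of $B$ guarantees $r\neq 0$ (so the normalization is consistent) is a small but welcome refinement the paper leaves implicit.
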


\begin{proof}
Equations \eqref{eq:GeneralizedHopfManifold} comprise $3n+2$ equations that depend 
on $3n+n_\alpha+1$ variables and parameters $ \xcrit$, $a$, $b$, $\omega$ and $ \alpha^{(c)}$. 
In the neighborhood of the given regular solution, 
these equations define an $(n_\alpha-1)$-dimensional manifold of modified Hopf points.
By the same arguments as in the proof to Prop.~\ref{prop:ModFoldNV}, the columns of the matrix $B$ that results from the outer product
\begin{align}\label{eq:BHopfHelper}
	B
	=&\begin{bmatrix}
		\nabla_{ \xcrit}\\
		\nabla_{ a }\\
		\nabla_{ b }\\
		\nabla_{\omega}\\
		\nabla_{ \alpha^{(c)}}\\
\end{bmatrix}
\begin{bmatrix}
 f( \xcrit, \xcrit,..., \xcrit, \alpha^{(c)})\\
\sigma a -\omega b -\sum_{k=0}^m A_k\big(\expcos(\sigma,\omega,\tau_k) a +\expsin(\sigma,\omega,\tau_k) b \big)\\
\sigma b+ \omega a  -\sum_{k=0}^m A_k\big(\expcos(\sigma,\omega,\tau_k) b -\expsin(\sigma,\omega,\tau_k) a \big)\\
 a ^\prime a + b ^\prime b  -1\\
 a ^\prime b 
\end{bmatrix}^\prime
\end{align} 
span the normal space to this manifold.
Evaluating~\eqref{eq:BHopfHelper} results in the matrix shown in~\eqref{eq:GeneralizedHopfNV2}, which can be seen as follows. 
Let $B$ be the block matrix with blocks $B_{jk}$, $j= 1, \dots, 4$, $k= 1, \dots, 5$  as in~\eqref{eq:GeneralizedHopfNV2} 
and blocks $B_{jk}$, $j= 5$, $k=1, \dots, 5$ as in~\eqref{eq:GeneralizedHopfNV3}. 
Block matrices $B_{14}$, $B_{15}$, $B_{21}$, $B_{31}$, $B_{41}$, $B_{44}$ and $B_{45}$ are zero, 
because the respective functions in~\eqref{eq:BHopfHelper} do not depend on the variables with respect to which the derivatives are calculated. 
The expressions for the block matrices $B_{11}$, $B_{24}$, $B_{34}$, $B_{25}$, $B_{35}$ are also obvious.  
$B_{22}$ results from the outer product
\begin{align*}
	B_{22}=&\nabla_{ a }\left(\sigma a ^\prime-\omega b ^\prime
	-\sum_{k=0}^m \big(\expcos(\sigma,\omega,\tau_k) a ^\prime-\expsin(\sigma,\omega,\tau_k) b ^\prime\big)A_k^\prime\right)
	=\sigma I-\sum_{k=0}^m \expcos(\sigma,\omega,\tau_k)A_k^\prime
\end{align*}
and $B_{23}$, $B_{32}$, $B_{33}$, $B_{42}$ and $B_{43}$ can be obtained from similar calculations. 
The remaining blocks, 
which contain the derivatives of \eqref{eq:GeneralizedHopfManifoldReal} and of~\eqref{eq:GeneralizedHopfManifoldImag} with respect to $x^{(c)}$ and $\alpha^{(c)}$,
require application of the chain and product rule, 
because the delays $\tau_k$ and the Jacobians $A_k$ are functions of $x^{(c)}$ and $\alpha^{(c)}$. 
This yields
\begin{align*}
	B_{12}
	=&
	\nabla_{ \xcrit} \left(
		\sigma a ^\prime-\omega b ^\prime-\sum_{k=0}^m \big[
			\expcos(\sigma,\omega,\tau_k) a ^\prime+\expsin(\sigma,\omega,\tau_k) b ^\prime
		\big]
		A_k^\prime 
	\right)
	 \\
	 &= -\sum_{k=0}^m\Big(
	 	\left[\nabla_{\xcrit} c_k\right]a^\prime A_k^\prime
	 	+\left[\nabla_{\xcrit} s_k\right]b^\prime A_k^\prime
	 	+c_k\left[\nabla_{\xcrit} a^\prime A_k^\prime\right]
	 	+s_k\left[\nabla_{\xcrit} b^\prime A_k^\prime\right]
	 \Big)
\end{align*}
Substituting 
\begin{align*}
  \nabla_{\xcrit}\expcos(\sigma, \omega, \tau_k)&= -\left(\sigma\expcos(\sigma, \omega, \tau_k)+ \omega\expsin(\sigma, \omega, \tau_k\right)\left[\nabla_{\xcrit}\tau_k\right]
  \mbox{ and}
  \\
  \nabla_{\xcrit}\expsin(\sigma, \omega, \tau_k)&= \left(-\sigma\expsin(\sigma, \omega, \tau_k)+\omega\expcos(\sigma, \omega, \tau_k)\right)\left[\nabla_{\xcrit}\tau_k\right]
\end{align*}
results in the expression for $B_{12}$ stated in the proposition.
The derivatives of \eqref{eq:GeneralizedHopfManifoldReal} with respect to $\alpha^{(c)}$, 
and of~\eqref{eq:GeneralizedHopfManifoldImag} with respect to $x^{(c)}$ and $\alpha^{(c)}$,
can be calculated accordingly and result in the expressions stated in the proposition for $B_{13}$
and, respectively, $B_{52}$ and $B_{53}$. 
Now recall the columns of $B$ span the normal space to the critical manifold in the space with the components $(x, a, b, \omega, \alpha)$. 
For the same reasons as stated in the proof of Prop.~\ref{prop:ModFoldNV}, we seek the particular linear combination $\kappa$ of the columns of $B$
that only has nonzero contributions in the subspace of the parameters $\alpha$. 
This corresponds to the linear combination $\kappa$ that solves~\eqref{eq:GeneralizedHopfNV2} and~\eqref{eq:GeneralizedHopfNV3}. 
If $\kappa$ solves these equations $\rho\kappa$ solves the equations for any $\rho\in\mathbb{R}$, $\rho\ne 0$. 
Because unit length of $r$ is enforced with~\eqref{eq:NVNormingModHopf}, $\kappa$ and $r$ are uniquely defined. 
\end{proof} 

The augmented system~\eqref{eq:HopfAug} and Prop.~\ref{prop:ModHopfNV} treat the case of modified Hopf points with leading eigenvalues $\lambda= \sigma\pm\i\omega$, $\sigma\le 0$. The case of Hopf bifurcations, i.e., $\sigma=0$, is obviously included. The augmented system for Hopf bifurcations of the treated class of delay differential equations reads (see, e.g., \cite{Engelborghs1999,Engelborghs2002})
\begin{subequations}\label{eq:HopfAug}
	\begin{align}
		f(\xcrit,\xcrit,...,\xcrit,\alphacrit)&=0\label{eq:HopfManifoldSteady}\\
		-\omega b-\sum_{k=0}^m A_k\big(\cos(\omega\tau_k) a+\sin(\omega\tau_k) b\big)&=0\label{eq:HopfManifoldReal}\\
		\omega a-\sum_{k=0}^m A_k\big(\cos(\omega\tau_k) b-\sin(\omega\tau_k) a\big)&=0\label{eq:HopfManifoldImag}\\
		a^\prime a+b^\prime b -1&=0\label{eq:HopfManifoldLength}\\
		a^\prime b&=0 ,
	\end{align}
\end{subequations}
where $a$, $b$ and $\omega$ are as in~\eqref{eq:GeneralizedHopfManifold}. 
We state the normal vector system for completeness in the following corollary, which immediately follows from Prop.~\ref{prop:ModHopfNV} with $\sigma= 0$. 

\begin{corollary}[normal vector to manifold of Hopf bifurcations] \label{cor:HopfNV}
Assume  $(\xcrit$, $\alpha^{(c)}$, $\omega$, $a$, $b)$ is a regular solution to~\eqref{eq:HopfAug} in the variables $x^{(c)}$, $\omega$, $a$, $b$ and one of the elements of $\alpha^{(c)}$. 
Then $r$ that obeys \eqref{eq:HopfAug}, \eqref{eq:GeneralizedHopfNV2}, \eqref{eq:GeneralizedHopfNV3} and \eqref{eq:NVNormingModHopf} 
is normal to the Hopf bifurcation manifold at this solution, where the block matrices $B_{jk}$ from Prop.~\ref{prop:ModHopfNV} have to be replaced by 
\begin{align*}
	B_{12}=&
	-\sum_{k=0}^m 
	\Big(\Big.
		\omega
		\big[\nabla_{ \xcrit}\tau_k\big]
		\big[
			\cos(\omega\tau_k) b ^\prime-\sin(\omega\tau_k) a ^\prime
		\big]
		A_k^\prime
		-
		\cos(\omega\tau_k)
		\big[
			\nabla_{ \xcrit} a ^\prime A_k^\prime
		\big]
		-\sin(\omega\tau_k)
		\big[\nabla_{ \xcrit} b ^\prime A_k^\prime]
	\Big.\Big)	
	\,,
	\\
	B_{13}=
	&\sum_{k=0}^m 
	\Big(\Big.
		\omega
		\big[\nabla_{ \xcrit}\tau_k\big] 
		\big[
			\sin(\omega\tau_k) b^\prime
			+\cos(\omega\tau_k) a ^\prime
		\big]
		A_k^\prime
		-
		\cos(\omega\tau_k)
		(\nabla_{ \xcrit} b ^\prime A_k^\prime)
		+\sin(\omega\tau_k)
		(\nabla_{ \xcrit} a ^\prime A_k^\prime)
	\Big.\Big)
	\,,
\end{align*}
\begin{align*}
	B_{22}=&-\sum_{k=0}^m \cos(\omega\tau_k)A_k^\prime\,,\quad
	B_{23}=\omega I+\sum_{k=0}^m \sin(\omega\tau_k)A_k^\prime\,,
	\\
	B_{32}=&-\omega I-\sum_{k=0}^m \sin(\omega\tau_k)A_k^\prime\quad
	B_{33}=-\sum_{k=0}^m \cos(\omega\tau_k)A_k^\prime\,,
\end{align*}
\begin{align*}
	B_{42}=&- b ^\prime+\sum_{k=0}^m\tau_k\big[\sin(\omega\tau_k) a ^\prime-\cos(\omega\tau_k) b ^\prime)\big]A_k^\prime\,,
	\quad
	B_{43}=& a ^\prime+\sum_{k=0}^m \tau_k\big[\sin(\omega\tau_k) b ^\prime+\cos(\omega\tau_k) a ^\prime)\big]A_k^\prime\,,
\end{align*}
\begin{align*}
	B_{52}=
	&\sum_{k=0}^m 
	\Big(\Big.
		\omega
		\big[\nabla_{ \alpha^{(c)}}\tau_k\big]
		\big[\sin(\omega\tau_k) a ^\prime-\cos(\omega\tau_k) b ^\prime\big]
		A_k^\prime
		- 
		\cos(\omega\tau_k)
		\big[\nabla_{ \alpha^{(c)}}a^\prime A_k^\prime\big]
		-\sin(\omega\tau_k)
		\big[\nabla_{ \alpha^{(c)}}b^\prime A_k^\prime\big]
	\Big.\Big)
	\,,
	\\
	B_{53}=
	&\sum_{k=0}^m 
	\Big(\Big.
		\omega
		\big[\nabla_{ \alpha^{(c)}}\tau_k\big]
		\big[\sin(\omega\tau_k) b ^\prime+\cos(\omega\tau_k) a ^\prime\big]
		A_k^\prime
		- 
		\cos(\omega\tau_k)
		\big[\nabla_{ \alpha^{(c)}} b ^\prime A_k^\prime\big]
		+\sin(\omega\tau_k)
		\big[\nabla_{ \alpha^{(c)}} a ^\prime A_k^\prime\big]
	\Big.\Big)
	\,.
\end{align*}
\end{corollary}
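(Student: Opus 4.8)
The plan is to obtain this corollary in exactly the way the fold corollary was obtained from Proposition~\ref{prop:ModFoldNV}, namely as the specialization of Proposition~\ref{prop:ModHopfNV} to $\sigma=0$. The first step is to observe that $\expcos(0,\omega,\tau_k)=\cos(\omega\tau_k)$ and $\expsin(0,\omega,\tau_k)=\sin(\omega\tau_k)$, so that the Hopf augmented system~\eqref{eq:HopfAug} is literally the modified Hopf system~\eqref{eq:GeneralizedHopfManifold} with $\sigma$ set to zero; the normalization~\eqref{eq:HopfManifoldLength} and the phase condition do not involve $\sigma$ at all. In particular the list of variables appearing in the corollary ($\xcrit$, $\omega$, $a$, $b$ and one element of $\alpha^{(c)}$) coincides with the list in the proposition, and a regular solution of~\eqref{eq:HopfAug} in these variables is, by the very definition of regularity, a regular solution of~\eqref{eq:GeneralizedHopfManifold} at $\sigma=0$. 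Hence Proposition~\ref{prop:ModHopfNV} applies without change and asserts that the $r$ determined by~\eqref{eq:GeneralizedHopfNV1}--\eqref{eq:NVNormingModHopf}, with every block $B_{jk}$ evaluated at $\sigma=0$, is normal to the Hopf bifurcation manifold at the given solution.

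The second step is the purely computational verification that the blocks $B_{jk}$ collapse to the formulas printed in the corollary. In $B_{22}$, $B_{23}$, $B_{32}$, $B_{33}$ the term $\sigma I$ disappears and the exponential-trigonometric factors reduce to $\cos(\omega\tau_k)$ and $\sin(\omega\tau_k)$; in $B_{42}$ and $B_{43}$ the factors $\expsin$ and $\expcos$ likewise collapse to $\sin$ and $\cos$ while the $\tau_k$ prefactors (which come from differentiating $\expcos$, $\expsin$ with respect to $\omega$) are unchanged; and in $B_{12}$, $B_{13}$, $B_{52}$, $B_{53}$ every summand carrying an explicit factor $\sigma$ vanishes, leaving exactly the $\omega$-terms and the $\nabla a^\prime A_k^\prime$, $\nabla b^\prime A_k^\prime$ terms with $\expcos$, $\expsin$ replaced by $\cos$, $\sin$. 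A term-by-term comparison with the expressions for $B_{12},\dots,B_{53}$ stated in the corollary then finishes the argument, and one can simply remark that the corollary follows from Proposition~\ref{prop:ModHopfNV} by substituting $\sigma=0$.

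I do not expect any genuine obstacle: the content is entirely contained in Proposition~\ref{prop:ModHopfNV}, and the only thing requiring care is the sign bookkeeping in $B_{12}$, $B_{13}$, $B_{52}$, $B_{53}$, i.e.\ confirming that after deleting the $\sigma$-proportional summands the surviving terms carry precisely the signs shown. Since all of these terms already appear—up to the harmless factors $\e^{-\sigma\tau_k}$—in the proof of Proposition~\ref{prop:ModHopfNV}, this check is immediate.
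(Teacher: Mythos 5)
Your proposal is correct and is exactly the paper's argument: the corollary is obtained by substituting $\sigma=0$ into Proposition~\ref{prop:ModHopfNV}, using $\expcos(0,\omega,\tau_k)=\cos(\omega\tau_k)$ and $\expsin(0,\omega,\tau_k)=\sin(\omega\tau_k)$ and deleting the $\sigma$-proportional summands. One remark: if you actually carry out the term-by-term sign check you advertise, you will find that the printed $B_{12}$ of the corollary (with the overall minus sign distributed) disagrees in the sign of the $\cos(\omega\tau_k)\big[\nabla_{\xcrit}a^\prime A_k^\prime\big]$ and $\sin(\omega\tau_k)\big[\nabla_{\xcrit}b^\prime A_k^\prime\big]$ terms with the $\sigma=0$ specialization of the proposition (and with $B_{52}$); this is a typographical slip in the corollary's statement, not a gap in your argument.
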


\section{Laser diode optimization with normal vector constraints}\label{sec:LaserNVOptim}
With the optimization goal stated in Section~\ref{sec:Model} and the constraints for robust stability described in the previous section, we are able to state the full optimization problem with constraints for robustness. It reads
\begin{subequations}\label{eq:optimProbComplete}
\begin{align}
\min~-|A|^2&\label{eq:optimProbCostComplete}\\
\text{s.\,t.}\quad0 &= f(x^{(0)},x^{(0)},\alpha^{(0)})\label{eq:optimProbStStComplete}\\
0.02 + \Delta\tau\cdot\sqrt{n_\alpha} <& j^{(0)} < 1.0265 -\Delta\tau\cdot\sqrt{n_\alpha} \label{eq:optimProbCostConstraintj}\\
0.0001+\Delta\eta\cdot\sqrt{n_\alpha}  <&\eta^{(0)} < 0.02-\Delta\eta\cdot\sqrt{n_\alpha} \label{eq:optimProbCostConstrainteta}\\
220 +\Delta\tau\cdot\sqrt{n_\alpha} <&\tau^{(0)} < 2000-\Delta\tau\cdot\sqrt{n_\alpha}\label{eq:optimProbCostConstrainttau} \\
0&=G(\xcrit,\alphacrit,u^{(c)} )\label{eq:optimProbCompleteMani}\\
0&=H(\xcrit,\alphacrit,u^{(c)} ,\kappa,r)\label{eq:optimProbCompleteNVSys}\\
0&=r^\prime r-1\label{eq:optimProbCompleteNVnorm}\\
0&=\alpha^{(0)}-\alpha^{(c)} + d\frac{r}{||r||}\label{eq:optimProbCompleteConnection}\\
d&>\sqrt{n_\alpha}\Delta \alpha\label{eq:optimProbCompleteDistance}\,,
\end{align}
\end{subequations}
where $x= [\real{A}, \imag{A}, N_1, \dots, N_K, \Omega]^\prime$, $\alpha= [j, \eta, \alpha_{LW}, \phi, \tau, T, d]^\prime$,
and the superscripts $(c)$ and $(0)$ refer to the critical and nominal points, respectively,
%
Equation~\eqref{eq:optimProbStStComplete} enforces steady state operation. 
Constraints~\eqref{eq:optimProbCostConstraintj}--\eqref{eq:optimProbCostConstrainttau} 
are the robust counterparts to~\eqref{eq:NonrobustConstraintj}--\eqref{eq:NonrobustConstrainttau}. 
Constraints~\eqref{eq:optimProbCompleteMani} and~\eqref{eq:optimProbCompleteNVSys} are the normal vector constraints. 
Since a robust distance to a manifold of generalized Hopf points has to be enforced in this particular example, they are of the 
type~\eqref{eq:GeneralizedHopfManifold} and~\eqref{eq:GeneralizedHopfNV2}--\eqref{eq:NVNormingModHopf}, respectively. 
Just as in the motivating example in Section~\ref{sec:LaserProblemOutline}, there exist $n_\alpha= 7$ uncertain parameters (see Table~\ref{tab:Params}). 
Four of these uncertain parameters are assumed to be uncertain but fixed, whereas the remaining three,  $j$, $\eta$ and $\tau$, can be altered to maximize the laser intensity.

While the optimization \eqref{eq:optimProb} resulted in an unstable laser operation, for which small disturbances triggered quasi-chaotic behavior, robust stability is guaranteed in~\eqref{eq:optimProbComplete}.
The robustness constraints \eqref{eq:optimProbCompleteMani}-\eqref{eq:optimProbCompleteDistance} prevent crossing the Hopf bifurcation manifolds shown in Figure~\ref{fig:3DParam31}. The parameter space shown in this figure is spanned by the optimization variables. It is evident from the figure that Hopf bifurcations always appear as the feedback strength $\eta$ is increased. 
The 3-dimensional plot in Figure~\ref{fig:3DParam31} also underlines the necessity to interpret 2-dimensional visualizations with care, especially as the the space of uncertain variables is 7-dimensional in this example.

\begin{figure}[hbtp]
	\center
	\includegraphics[width=0.7\textwidth]{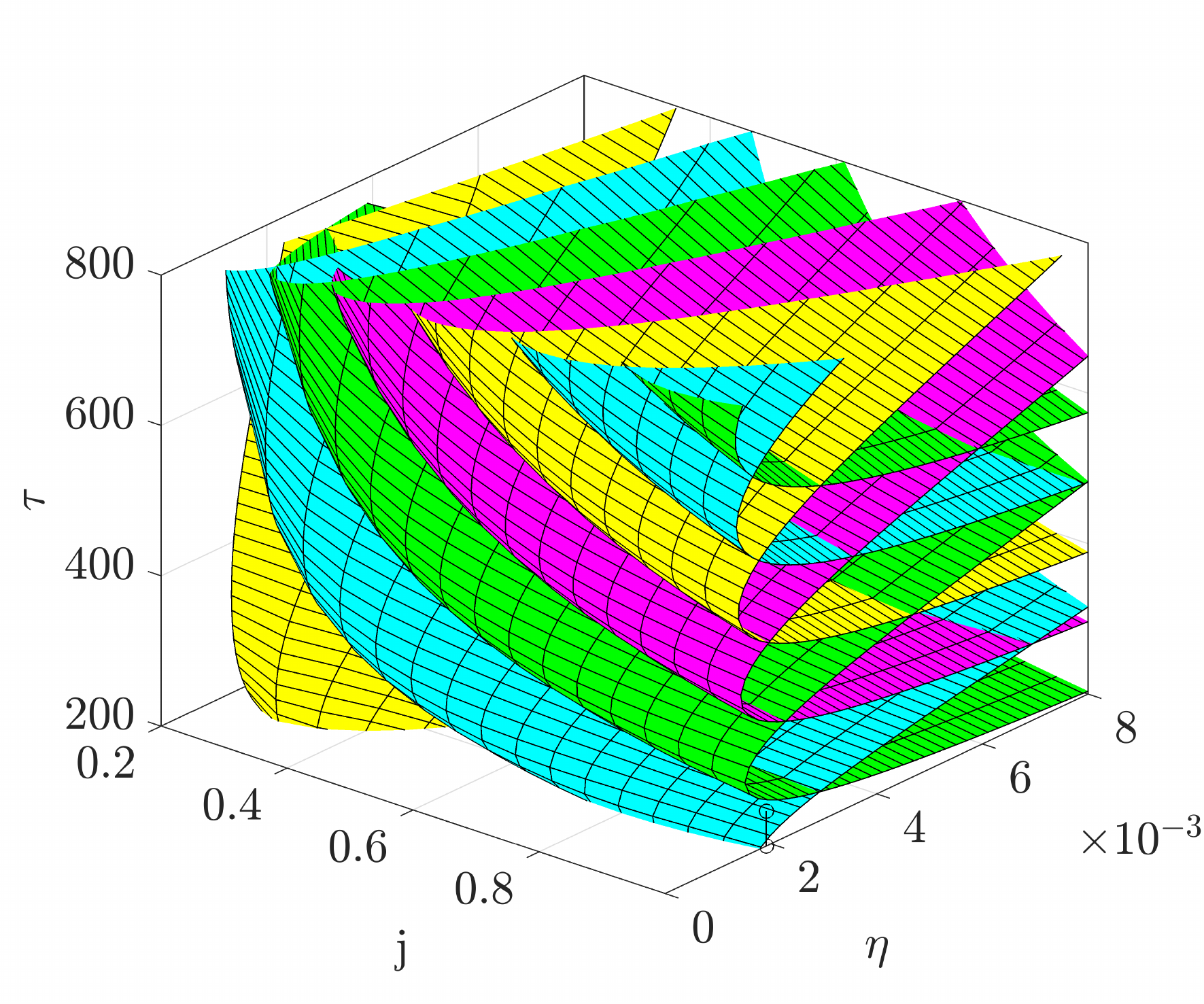}
	\caption{Hopf bifurcation manifolds in the $j$-$\eta$-$\tau$-space. The manifolds are colored differently for easy distinction. \label{fig:3DParam31}}
\end{figure}

The normal vector constraints \eqref{eq:optimProbCompleteMani}-\eqref{eq:optimProbCompleteDistance} take only one critical manifold into account, while multiple such manifolds obviously exist (see Figure~\ref{fig:3DParam31}). 
Multiple manifolds can be treated rigorously with an approach that resembles active set optimization methods~\cite{Monnigmann2007}. We do not discuss this aspect here in detail, however, since only one Hopf manifold plays a role.

Solving~\eqref{eq:optimProbComplete} results in the optimal parameters  $j^{(0)}=1$, $\eta^{(0)} = 1.911\cdot10^{-3}$ and $\tau^{(0)} = 246.46$. The laser diode operates at an intensity of $|A|^2=6.268$ at the optimal robust steady state. 
We checked the stability of the steady state solutions at the $2^{n_\alpha}=128$ vertices of the robustness hypercube~\eqref{eq:uncertaintyHyperrectangle} to corroborate the optimization result. 
If compared to the previous unstable result, the feedback strength $\eta$ is smaller. Its upper bound was active in the previous optimization without robustness constraints. In the current robustly stable optimization, it is bounded by the stability requirement. In both cases, the optimal pump current $j$  and the feedback delay $\tau$ are bounded by their upper and lower bounds, respectively. However, the bounds on $j$ and $\tau$ have to be tightened in the current optimization to guarantee that the bounds are met even in the presence of uncertainty (cf. Figure~\ref{fig:Param31zoom}-\ref{fig:Param312zoom}).
These parameter differences result in different intensities $|A|^2$. The restrictions leading to a robustly stable laser operation result in an intensity reduction of about $6\%$. This reduction can be interpreted to be the price for stability and robustness.

Figures~\ref{fig:Param31}--\ref{fig:ParamAlphaPhi31} visualize the robustly stable optimal point and its robustness regions in the parameter space. All figures contain cuts of the manifolds shown in Figure~\ref{fig:3DParam31}.

Figure~\ref{fig:Param31} illustrates the restricting nature of the stability constraints in the $\eta$-$\tau$-plane.
The value of $\eta$ cannot be increased further without violating the robustness constraint. Parts of the hyperspherical outer approximation of the uncertainty region appear to lie in the unstable region. We stress, however, the hypersphere remains completely in the stable region. 
The impression conveyed by Figure~\ref{fig:Param31} is merely caused by the 2-dimensional cut of the 7-dimensional parameter space.

Figure~\ref{fig:Param312} shows the optimal point in the $j$-$\tau$-plane. 
It is evident from Figure~\ref{fig:Param312}b that, in addition to the lower bound from~\eqref{eq:optimProbCostConstrainttau}
on $\tau$, the upper bound from~\eqref{eq:optimProbCostConstraintj} 
on $j$ is active at the optimal point.
  
\begin{figure*}[hbtp]
	\center
	\subfloat[View of the optimal solution on about the same scale as used in Figure~\ref{fig:3DParam31}. The interior of the dashed rectangle is shown in detail in Figure~\ref{fig:Param31zoom}. \label{fig:Param31all}]{\includegraphics[width=0.48\textwidth]{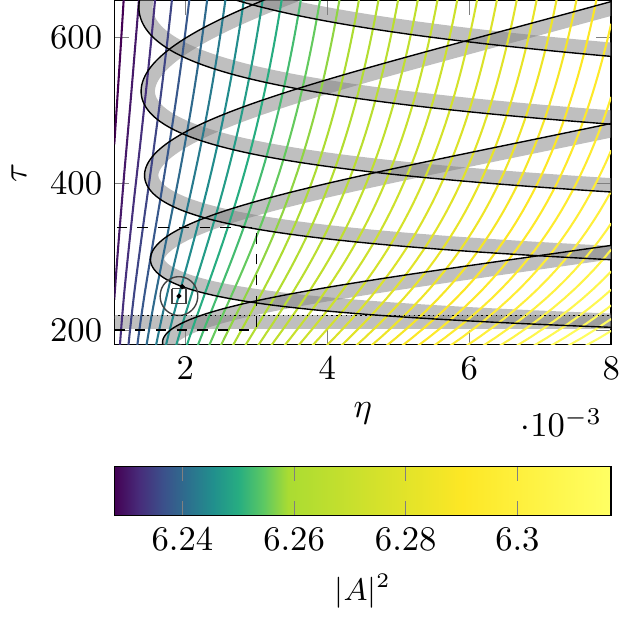}}\quad
	\subfloat[More detailed view of the nominal point, uncertainty region and active constraints.\label{fig:Param31zoom}]{\includegraphics[width=0.48\textwidth]{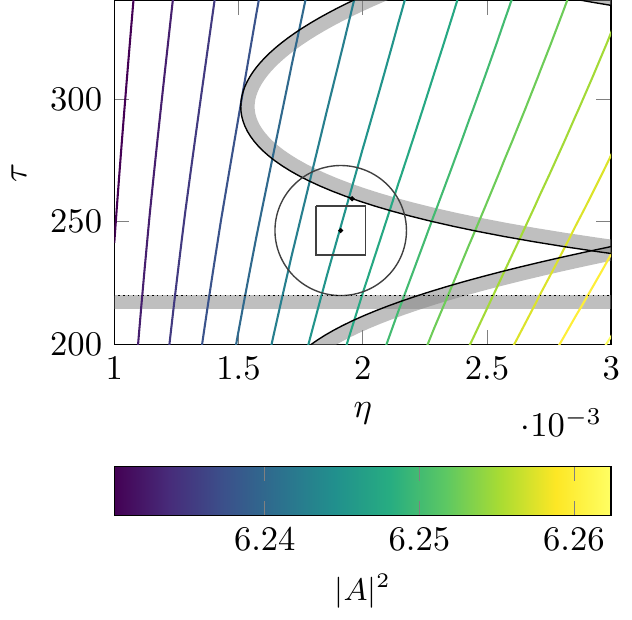}}
	\caption{Optimal point that results from solving~\eqref{eq:optimProbComplete} in the $\eta$-$\tau$-plane.
	Solid black lines are manifolds of Hopf bifurcation points, where the unstable sides are marked by grey bands. The botted black line marks the lower bound from~\eqref{eq:optimProbCostConstrainttau}. \label{fig:Param31}}
\end{figure*}

\begin{figure*}[hbtp]
	\center
	\subfloat[View of the optimal solution on about the same scale as in Figure~\ref{fig:3DParam31}. The interior of the dashed rectangle is shown in detail in Figure~\ref{fig:Param312zoom} \label{fig:Param312all}]{\includegraphics[width=0.48\textwidth]{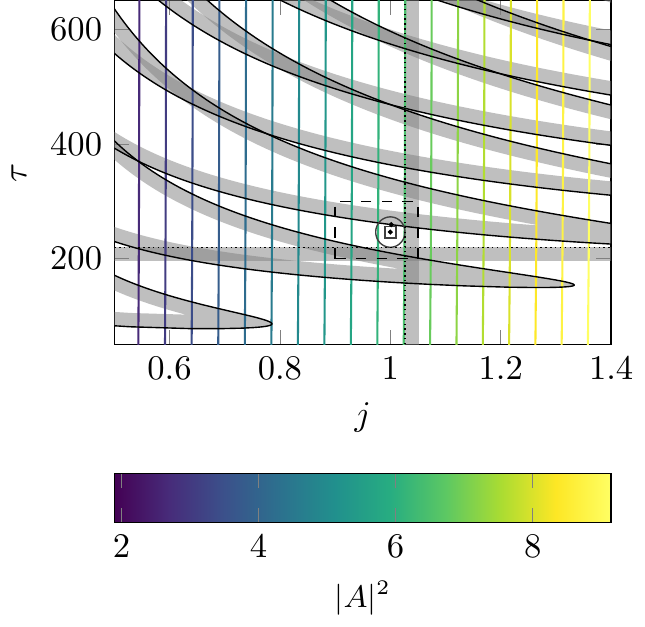}}\quad
	\subfloat[\label{fig:Param312zoom} More detailed view of nominal point, uncertainty region and active constraints.]{\includegraphics[width=0.48\textwidth]{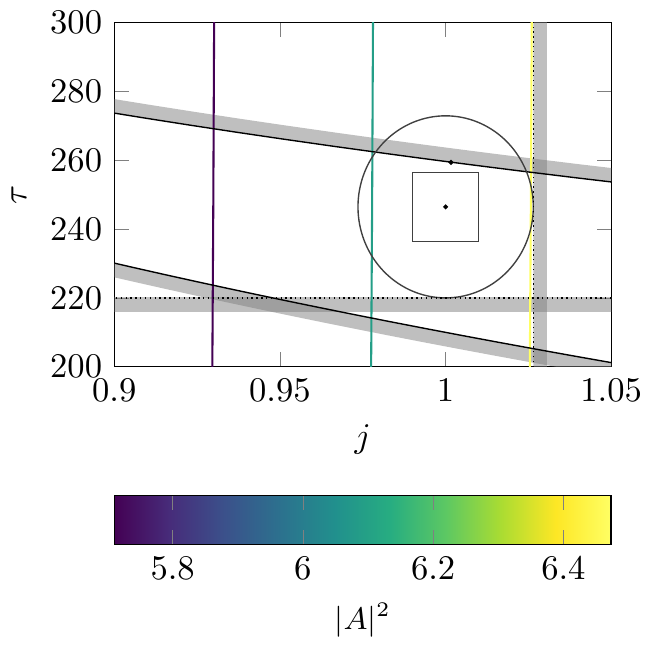}}
	\caption{Optimal point that results from solving~\eqref{eq:optimProbComplete} in the $j$-$\eta$-plane. 
	Dotted black lines are the lower bound and upper bound of $\tau$ and $j$. All other elements have the same meaning as in Figure~\ref{fig:Param31}. \label{fig:Param312}}
\end{figure*}

Figure~\ref{fig:ParamAlphaPhi31} illustrates the optimal point in the $\phi$-$\alpha_{LW}$-plane.  This plane is chosen, because the components of the normal vector along the $\phi$- and $\alpha_{LW}$-axes are the largest ones. The absence of contour lines in this figure accounts for the fact, that $\alpha_{LW}$ and $\phi$ are assumed to be uncertain, but cannot be modified for optimization. 

\begin{figure}[hbtp]
	\center
	\includegraphics[]{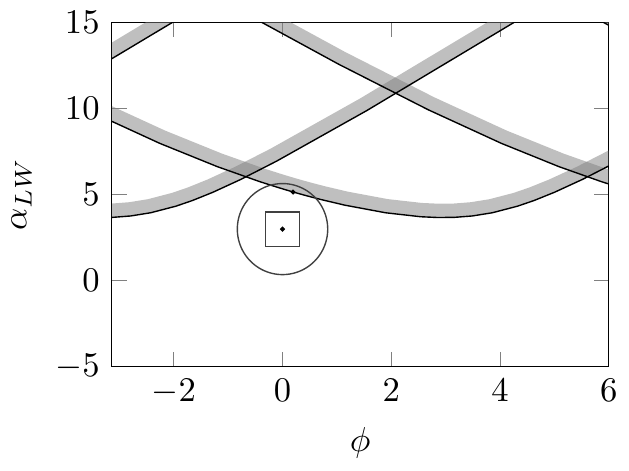}
	\caption{Optimal point plotted at $\alpha_{LW}$-$\phi$-plane. What appear to be multiple manifolds, is in fact only a single manifold repeating at $2\pi$ intervals. \label{fig:ParamAlphaPhi31}}
\end{figure}

The dynamical behavior of the system at the optimal point is illustrated with Figure~\ref{fig:simStable} for $K=31$. The laser diode converges to its steady state, despite the initial disturbance.
\begin{figure}[hbtp]
\center
\includegraphics[]{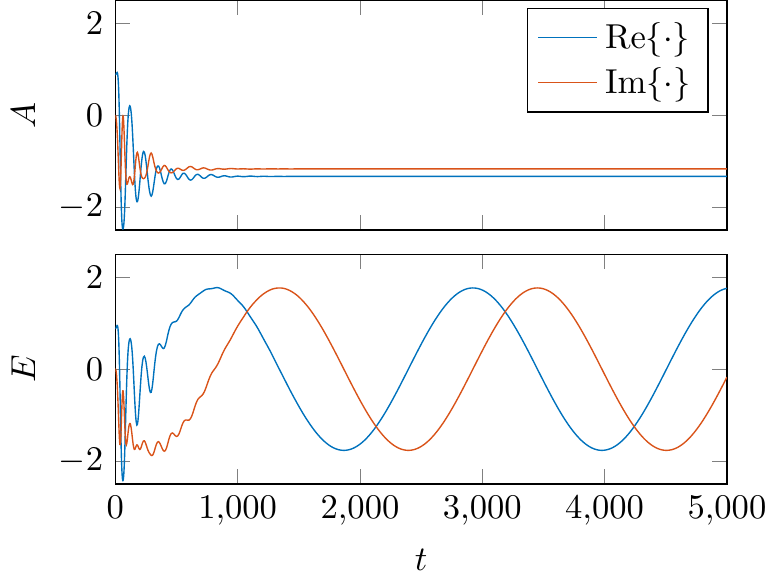}
\caption{Simulation of laser diode \eqref{eq:ODECarrierModelRot} for $K=31$ with optimal parameters. The initial steady state is disturbed at $t=0$, the electrical field is reduced by $50\,\%$. At $t=0.1$, the laser diode has already returned to its steady state. Results are shown both in rotating coordinates $A$ and fixed coordinates $E$. 
\label{fig:simStable}}
\end{figure}

\section{Conclusion}\label{sec:Conclusion}
We introduced a method for the robust optimization of parametrically uncertain finite-dimensional nonlinear dynamical systems with state- and parameter-dependent delays.
Stability and robustness are guaranteed with constraints that enforce a lower bound on the distance of the optimal point to submanifolds of saddle-node and Hopf bifurcations on the steady state manifold. The lower bound on the distance can be interpreted as a finite variation of the model parameters under which stability must be guaranteed; it therefore is a useful robustness measure in applications.  
We illustrated the proposed method with the optimization of a laser diode system with a delay due to an external cavity. 

\section*{Acknowledgements}
Funding by Deutsche Forschungsgemeinschaft (grant number MO 1086/13-1) is gratefully acknowledged.

\bibliographystyle{plain}
\bibliography{References} 

%

\end{document}